\numberwithin{equation}{section}
\newtheorem{thm}{Theorem}[section]
\newtheorem{lem}{Lemma}[section]
\newtheorem{rmk}{Remark}[section]
\newcommand{\Z}{\mathbb{Z}}
\newcommand{\F}{\mathbb{F}}
\title{Largest Prime Factors of Quadratic Polynomials}
\date{}
\author{N. A. Carella}
\begin{document}

\thispagestyle{empty}
\date{}
\maketitle

\vskip .25 in 
\begin{abstract}
Let $x>1$ be a large number. This note shows that the largest prime factor of the quadratic product $\prod_{x\leq n\leq 2x}\left(n^2+1 \right)$ satisfies the relation $p \geq x^{3/2}$ as $x$ tends to infinity. This result improves the current estimate $p \geq x^{1.279}$. \let\thefootnote\relax\footnote{ \today \date{} \\
		\textit{AMS MSC}: Primary 11N05; Secondary 11N32. \\
		\textit{Keywords}: Prime number; Distribution of prime; Polynomial prime value.}
\end{abstract}
\setcounter{tocdepth}{1}
\tableofcontents

\section{Introduction }\label{S0022-IN}\hypertarget{S0022-IN}
Let $x>1$ be a large real number and let the symbol $P(n)\geq2$ denotes the largest prime divisor of an integer $n\geq2$. This note is concerned with the largest prime factor of the polynomial $n^2+1$. Apparently, there are several methods used to study the largest prime factors of polynomials $f(t)\in\Z[t]$ over the integers $\Z$. A technique, based on transcendental methods and similar concepts, attempts to estimate a lower bound of the largest prime divisor of $n^2+1$ for a single integer $n\geq1$, see \cite{MK1933}, \cite{CS1935}. Another technique, based on the prime number theorem, sieve methods and other advanced techniques, attempts to estimate a lower bound of the largest prime divisor of $n^2+1$ over a short interval, exempli gratia $n\in[x,2x]$, see \cite{NT1921}, \cite{HC1967}, \cite{MJ2023}. The latest result derived from transcendental methods claims that $P(n^2+1)\gg (\log\log n)^2/\log\log\log n$, this is proved in {\color{red}\cite[Theorem 1.1]{PH2024}}. The latest result derived from sieve methods claims that $P(n^2+1)\gg n^{1.279}$ and the conditional estimate $P(n^2+1)\gg n^{1.312}$, this is proved in {\color{red}\cite[Theorem 2]{MJ2023}}. An abridged history of the record values for the largest prime divisor $P(n^2+1)\geq2$ of the polynomial $f(n)=n^2+1$, expressed in terms of $n$, are tabulated in \autoref{table100-2}. \\

The largest prime factors of the more general products $\prod_{n\leq x}\left(n^2+a \right)$ associated with the irreducible polynomials $t^2+a\in \Z[t]$ are investigated in \cite{HC1967}, \cite{DI1982}, \cite{MJ2023} et alii. \\

This note presents a new result derived from the prime number theorem and elementary techniques. \\

\begin{thm} \label{thm0022.100-2}\hypertarget{thm0022.100-2} If $x$ is a large number then the finite product $\prod_{x\leq n\leq 2x}\left(n^2+1 \right)$ is divisible by a large prime $ p\in [x^{3/2},4x^2+1]  $.
\end{thm}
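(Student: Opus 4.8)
The plan is to compare the size of the product against the maximal contribution that primes below $x^{3/2}$ can make to it. Write the product as $\prod_{x\le n\le 2x}(n^2+1)$ and observe first that the trivial inclusion $p\le n^2+1\le 4x^2+1$ already secures the upper end of the asserted range, so only the lower bound $p\ge x^{3/2}$ requires work. Taking logarithms and comparing the sum with an integral (no deep input is needed here), I would first establish the clean asymptotic
\[
\sum_{x\le n\le 2x}\log\!\left(n^2+1\right)=2x\log x+O(x),
\]
since there are $x+O(1)$ factors, each of size $\log(n^2+1)=2\log n+O(x^{-2})$ with $n\in[x,2x]$.

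For the arithmetic side I would use the identity $\log m=\sum_{p^k\mid m}\log p$ to rewrite the logarithm of the product as
\[
\sum_{p^k}\log p\cdot\#\{\,n\in[x,2x]:p^k\mid n^2+1\,\}.
\]
The congruence $t^2+1\equiv 0\pmod{p^k}$ has at most two roots, and none at all unless $p=2$ or $p\equiv 1\pmod 4$; hence each inner count is at most $2\!\left(x/p^k+1\right)$. Arguing by contradiction, suppose every prime factor of the product satisfies $p<x^{3/2}$. The prime powers with $k\ge 2$, together with the ``$+1$'' boundary terms attached to the small primes $p\le x$, contribute only $O(x)$, while the main term is governed by Mertens' theorem in the progression $1\bmod 4$, namely $\sum_{\,p\le T,\ p\equiv 1\,(4)}\tfrac{\log p}{p}=\tfrac12\log T+O(1)$. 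Applied with $T=x^{3/2}$ this yields for the smooth part the estimate $2x\cdot\tfrac12\log\!\left(x^{3/2}\right)=\tfrac32\,x\log x$, which is strictly smaller than $2x\log x$; the resulting gap of $\tfrac12 x\log x$ then contradicts the lower bound above and forces a prime factor $\ge x^{3/2}$.

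The step that genuinely needs care -- and which I expect to be the main obstacle -- is the contribution of the medium primes $p\in(x,x^{3/2}]$, for which the crude bound $2(x/p+1)$ replaces the expected $2x/p$ by the constant $2$ and, summed over all such primes, produces an error of size $\asymp x^{3/2}$, far exceeding the $\tfrac12 x\log x$ main term. To control this I would exploit that $n^2+1\le 4x^2+1<x^3$, so each $n^2+1$ has at most two prime factors exceeding $x$; consequently the number of incidences $\{(n,p):x<p\le x^{3/2},\ p\mid n^2+1\}$ is at most $2x$, rather than the $\asymp x^{3/2}/\log x$ predicted by the per-prime bound. Combined with the size restriction that two such factors must multiply to at most $4x^2+1$ (which in fact pins both of them below $4x$), this should collapse the boundary error to $o(x\log x)$ and preserve the strict inequality $\tfrac32 x\log x+o(x\log x)<2x\log x$. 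Making this reduction rigorous, that is, showing that the roots of $t^2\equiv-1$ lying in the short interval $[x,2x]$ cannot cause the medium primes to over-divide the product, is the crux on which the whole argument turns.
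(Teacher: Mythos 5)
Your instinct about where the difficulty lies is exactly right, but the fix you sketch does not work, so the proposal has a genuine gap at its acknowledged crux. The incidence count you invoke --- each $n^2+1\le 4x^2+1<x^3$ has at most two prime factors exceeding $x$, hence at most $2(x+1)$ pairs $(n,p)$ with $x<p\le x^{3/2}$ and $p\mid n^2+1$ --- bounds the medium-prime contribution only by
\begin{equation*}
\sum_{x<p\le x^{3/2}}\log p\cdot\#\bigl\{n\in[x,2x]:p\mid n^2+1\bigr\}\ \le\ \tfrac{3}{2}\log x\cdot 2(x+1)\ =\ 3x\log x+O(\log x),
\end{equation*}
which is $\Theta(x\log x)$, not $o(x\log x)$; it swallows the deficit of $\tfrac12 x\log x$ produced by your main-term computation, and no contradiction results. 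Your size restriction does dispose of the two-factor case: if two primes $>x$ divide $n^2+1$ then both lie in $(x,4x]$, there are only $O(x/\log x)$ such primes, and each divides $n^2+1$ for at most two $n\in[x,2x]$, so all incidences involving primes in $(x,4x]$ contribute $O(x)$ in total. But the case your argument never touches is the dominant one: $n^2+1=pm$ with a \emph{single} prime $p\in(4x,x^{3/2})$ and cofactor $m<x$. That configuration produces one incidence of weight up to $\tfrac32\log x$ per integer $n$, possibly for every $n$ in the interval, in complete harmony with your $2(x+1)$ incidence cap.

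Moreover, no argument built only from your ingredients (the bound $n^2+1\le 4x^2+1$, at most two roots per modulus, Mertens in the progression $1\bmod 4$, Chebyshev counts of primes) can close this gap, because all of them are satisfied in the following scenario: for about $2x/3$ of the $n\in[x,2x]$ one has $n^2+1=p_nm_n$ with $p_n$ a prime just below $x^{3/2}$ and $m_n$ a cofactor of size about $\sqrt{x}$ (automatically $x$-smooth), while for the remaining $x/3$ the value $n^2+1$ is $x$-smooth. Then the small primes receive $\tfrac23x\cdot\tfrac12\log x+\tfrac13x\cdot 2\log x=x\log x+O(x)$, exactly matching the unconditional computation; the grand total is $2x\log x+O(x)$; each prime near $x^{3/2}$ serves at most one $n$ (there are far more than $x$ of them); and the product has no prime factor $\ge x^{3/2}$. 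Excluding this scenario amounts to proving that the roots of $t^2\equiv-1\pmod p$, for $p$ of size $x^{3/2}$, do not land in $[x,2x]$ abnormally often. That equidistribution statement is precisely what Hooley, Deshouillers--Iwaniec and Merikoski attack with Weil/Kloosterman-sum and spectral methods, and it is the sole reason the unconditional record stands at $x^{1.279}$.

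For comparison, the paper follows the same outline as you (lower bound $2x\log x+O(x)$, von Mangoldt expansion, Mertens main term $R(x)$), and the entire weight of its argument falls on the same medium primes, there packaged as the fractional-part sum $S(x)=\sum_{p\le x^{1+\delta}}\left(\left\{(x-b)/p\right\}+\left\{(x+b)/p\right\}\right)\log p$, claimed in its Lemmas 4.1--4.2 to be at most $\delta x\log x+O(x\log\log x)$. But that proof splits the sum at ``$p=x\pm b$'' and applies Mertens over the interval $(x-b,x^{1+\delta}]$ as though $b$ were a fixed integer, whereas $b=b_p$ is a function of the summation variable $p$ and can be as large as $p/2\gg x$; for medium primes the fractional parts are generically of size comparable to $1$, so $S(x)$ is of order $x^{1+\delta}$, not $\delta x\log x$, unless the roots conspire --- and ruling out such conspiracy is again the equidistribution problem. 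In short, the paper does not overcome the obstacle you identified; it assumes it away. Your crux is the real crux, your proposal does not resolve it, and neither does the paper's own proof.
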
	
\vskip .15 in 
This improves both the current unconditional estimate $p \geq x^{1.279}$ and the conditional estimate $p \geq x^{1.312}$ in {\color{red}\cite[Theorem 2]{MJ2023}} and {\color{red}\cite[Theorem 1.1]{PH2024}}. \\

A prime $p\mid n^2+1$ is \textit{primitive prime divisor} if $p\nmid m^2+1$ for all $m<n$. According to the results in \cite{EH2008} and \cite{HG2024} the largest primitive prime divisor of $n^2+1$ coincides with the largest prime divisor, and it  satisfies the same estimates given on \autoref{table100-2}.\\

The foundation materials are presented in \hyperlink{S0022-I}{Section} \ref{S0022-I}, \hyperlink{S0022-P}{Section} \ref{S0022-P} and \hyperlink{S0022-S}{Section} \ref{S0022-S}. Lastly, the proof of \hyperlink{thm0022.100-2}{Theorem} \ref{thm0022.100-2} appears in \hyperlink{S0044-T}{Section} \ref{S0044-T}. 
\vskip .1 in 
\begin{table}[h]
	\setlength{\tabcolsep}{0.5cm}
	\renewcommand{\arraystretch}{1.750092}
	\setlength{\arrayrulewidth}{0.75pt}
	\centering
	\begin{tabular}{l|l}
		
		Estimate of the largest prime divisor	& Author and reference \\
		\hline
		$P(n^2+1)/n\to \infty $ as $ n\to\infty$	&  Chebyshev, see \cite{HC1967}.\\
		\hline
		$P(n^2+1)\geq n(\log n)^{\varepsilon}$ as $ n\to\infty$	& Nagell, see \cite{NT1921}. \\
		\hline
		$P(n^2+1)\geq n(\log n)^{c\log\log\log n}$ as $ n\to\infty$		&  Erdos, see \cite{EP1952}.\\
		\hline
		$P(n^2+1)\geq n^{11/10}$ as $ n\to\infty$		&  Hooley, see \cite{HC1967}.\\
		\hline
		$P(n^2+1)\geq n^{1.202}$ as $ n\to\infty$		&  Iwaniec, see \cite{DI1982}.\\
		\hline
		$P(n^2+1)\geq n^{1.279}$ as $ n\to\infty$		&  Merikoski, see \cite{MJ2023}.\\
	\end{tabular}  
	\vskip .1 in		
	\caption{Estimated magnitude of prime divisor of $n^2+1$}
	\label{table100-2}
\end{table}

\section{Integer Solutions of Polynomials in Short Intervals}\label{S0022-I}\hypertarget{S0022-I}
Standard techniques for counting the number of integers solutions of congruence equations modulo a prime are employed here. This counting method is quite similar to the analysis in {\color{red}\cite[Equation (3)]{HC1967}}. The symbol $\{z\}=z-[z]$ denotes the fractional part function.\\

\begin{lem} \label{lem0022.200-2}\hypertarget{lem0022.200-2} Let $x\geq x_0$ be a large number and let $p\in [x,2x]$ be a prime such that $p\equiv 1 \bmod 4$, then
	\begin{equation}\label{eq0022.100-2a}
		\sum_{\substack{x\leq n\leq 2x\\n^2+1\equiv 0 \bmod p}}1\leq\frac{2x}{p}+\left\{\frac{x- i}{p}\right\}+\left\{\frac{x+ i}{p}\right\},
	\end{equation}
	where $| i|<p/2$ are the roots of the congruence $n^2+1\equiv 0\bmod p$.
\end{lem}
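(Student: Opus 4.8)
The plan is to reduce the left-hand counting sum to two arithmetic progressions and then count lattice points in each progression by means of the floor/fractional-part identity. First I would record the structural fact about the roots: since $p\equiv 1\bmod 4$, the residue $-1$ is a quadratic residue modulo $p$, so by Lagrange's theorem the congruence $n^2+1\equiv 0\bmod p$ has exactly two solutions. Taking symmetric representatives, they are $n\equiv i$ and $n\equiv -i\bmod p$, where $i^2\equiv -1\bmod p$ and $|i|<p/2$; the second representative $-i$ likewise satisfies $|-i|<p/2$. Because $p$ is odd and $i\not\equiv 0\bmod p$ (otherwise $0\equiv -1$), the two classes are distinct, so the solution set of $n^2+1\equiv 0\bmod p$ inside $[x,2x]$ is the disjoint union of the progressions $n\equiv i$ and $n\equiv -i\bmod p$. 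Hence the sum in \eqref{eq0022.100-2a} splits as $N_i+N_{-i}$, where $N_r$ denotes the number of integers $n\in[x,2x]$ with $n\equiv r\bmod p$.

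Next I would count each progression. For a fixed residue $r$, the integers $n\in[x,2x]$ with $n\equiv r\bmod p$ correspond to the integers $m$ with $(x-r)/p\le m\le(2x-r)/p$, so that
\[
N_r=\left\lfloor \frac{2x-r}{p}\right\rfloor-\left\lceil \frac{x-r}{p}\right\rceil+1 .
\]
For $x$ with $(x-r)/p\notin\Z$ this simplifies, on writing $\lceil y\rceil=\lfloor y\rfloor+1$ and $[y]=y-\{y\}$, to
\[
N_r=\left\lfloor \frac{2x-r}{p}\right\rfloor-\left\lfloor \frac{x-r}{p}\right\rfloor
=\frac{x}{p}-\left\{\frac{2x-r}{p}\right\}+\left\{\frac{x-r}{p}\right\}
\le \frac{x}{p}+\left\{\frac{x-r}{p}\right\},
\]
the final step discarding the nonnegative term $\{(2x-r)/p\}$. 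Summing over the two residues $r=i$ and $r=-i$, and using $\{(x-(-i))/p\}=\{(x+i)/p\}$, yields
\[
N_i+N_{-i}\le \frac{2x}{p}+\left\{\frac{x-i}{p}\right\}+\left\{\frac{x+i}{p}\right\},
\]
which is exactly \eqref{eq0022.100-2a}.

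The step requiring the most care is the endpoint bookkeeping in the evaluation of $N_r$: one must verify that the closed interval $[x,2x]$, with $x$ a real number that need not be an integer, produces precisely the main term $x/p$ per class together with the single surviving fractional part $\{(x-r)/p\}$, rather than a spurious additive constant. The only exceptional configuration is the measure-zero case $(x-r)/p\in\Z$ coming from the lower endpoint, and I would treat it separately, observing that there $\{(x-r)/p\}=0$ while the boundary contributes at most one extra point. The hypothesis $x\ge x_0$ is used to guarantee that each progression is nonempty and that no degenerate short-interval effect disturbs the identity; once $x$ is large, the two density-$1/p$ progressions over a window of length $x$ each contribute $x/p+O(1)$, and tracking that $O(1)$ exactly through the fractional-part identity delivers the stated inequality.
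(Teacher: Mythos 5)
Your argument is essentially the paper's own proof: both identify the two roots $\pm i$ guaranteed by $p\equiv 1\bmod 4$ via Lagrange's theorem, count each residue class in $[x,2x]$ through the identity $\lfloor (2x-r)/p\rfloor-\lfloor (x-r)/p\rfloor=x/p-\left\{(2x-r)/p\right\}+\left\{(x-r)/p\right\}$, and then discard the nonnegative terms $\left\{(2x\mp i)/p\right\}$ to obtain the stated bound. If anything, you are more careful than the paper, which writes the count directly as a difference of floors and never acknowledges the ceiling/endpoint configuration $(x-r)/p\in\Z$ that you explicitly flag and set aside.
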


\begin{proof}[\textbf{Proof}]  The finite field $\F_p$ of characteristic $p\equiv 1 \bmod 4$  contains the complex $4$th roots of unity $\omega=e^{i2\pi /4}=i$. Moreover, by Lagrange theorem for polynomials over finite fields, the congruence equation $n^2+1\equiv 0\bmod p$ has $2$ solutions $\{-i,\,i\}$. Now, the equivalent class of the root $i=-b$ has 
	\begin{equation}\label{eq0022.200-2d}
		\left[\frac{2x+b}{p}\right]-	\left[\frac{x+b}{p}\right]=\frac{x}{p}-\left\{\frac{2x+ b}{p}\right\}+\left\{\frac{x+ b}{p}\right\}
	\end{equation}
	integer solutions in the range $x\leq n=ap- b\leq 2x$ with $a\geq1$ and $|b|<p/2$. Summing the contributions in \eqref{eq0022.200-2d} for both roots gives the total number
	\begin{eqnarray}\label{eq0022.200-2e}
		N_p(x,2x)&=&\left[\frac{2x- b}{p}\right]-	\left[\frac{x- b}{p}\right]+\left[\frac{2x+ b}{p}\right]-	\left[\frac{x+ b}{p}\right]\\
		&=&\frac{x}{p}-\left\{\frac{2x+ b}{p}\right\}+\left\{\frac{x+ b}{p}\right\}+
		\frac{x}{p}-\left\{\frac{2x- b}{p}\right\}+\left\{\frac{x- b}{p}\right\}\nonumber\\
		&\leq& \frac{2x}{p}+\left\{\frac{x+ b}{p}\right\}+\left\{\frac{x-b}{p}\right\}\nonumber
	\end{eqnarray}
	of integer solutions in the range $x\leq n\leq 2x$. 
\end{proof}

\begin{rmk} \label{rmk0022.200-2k}{\normalfont Evidently, the asymptotic approximation 
		\begin{equation}\label{eq0022.100-2k}
			\sum_{\substack{x\leq n\leq 2x\\n^2+1\equiv 0 \bmod p}}1=\frac{2x}{p}+O(1)
		\end{equation}
		is simpler and more manageable, but it does not work, it leads to a trivial result. However, both the exact expression and the approximation in \eqref{eq0022.200-2e} lead to nontrivial results. }
\end{rmk}

\section{Primary Term for Quadratic Polynomials}\label{S0022-P}\hypertarget{S0022-P}
The primary term $R(x)$ appearing in the proof of the main result, see \eqref{eq0044.400-2m}, is computed in this Section.\\

The counting measure for the primes $p\equiv a \bmod q$ over the short interval $[1,z]$ is approximated by the asymptotic formula
\begin{equation}\label{eq0022.300a}
	\pi ( z,q,a)=\frac{1}{\varphi(q)}\frac{z}{\log z}+O\left( \frac{z}{(\log z)^2}\right),
\end{equation}
where $\gcd(a,q)=1$, confer {\color{red}\cite[p.\ 424]{IK2004}}, {\color{red}\cite[p.\ 385]{MV2007}}.
\begin{lem} \label{lem0022.300-E}\hypertarget{lem0022.300-E} If $x\geq1$ is a large number and $a<q\leq (\log x)^C$ is a pair of relatively prime integers, where $c\geq0$ is a constant, then
	$$\sum_{\substack{p\leq x\\p\equiv a \bmod q}}\frac{\log p}{p}=\frac{1}{\varphi(q)}\log x +O\left(\log \log x\right),$$
where $\varphi(q)$ is the totient function.	
\end{lem}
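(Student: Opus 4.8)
The plan is to read this as Mertens' theorem in arithmetic progressions and to deduce it from the prime number theorem for progressions recorded in \eqref{eq0022.300a} by partial (Abel) summation. Set
$$S(x)=\sum_{\substack{p\le x\\ p\equiv a\bmod q}}\frac{\log p}{p},\qquad \theta(t;q,a)=\sum_{\substack{p\le t\\ p\equiv a\bmod q}}\log p .$$
Since the weight $(\log p)/p$ is smooth and decreasing, it can be stripped off a counting function by integration by parts, and the cleanest bookkeeping is obtained by working against the Chebyshev-type sum $\theta(t;q,a)$.

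First I would convert \eqref{eq0022.300a} into the estimate $\theta(t;q,a)=t/\varphi(q)+O(t/\log t)$, valid for $t$ large and uniform in $q\le(\log x)^C$; this is one further partial summation against the weight $\log t$. Then I would write
$$S(x)=\int_{2^-}^{x}\frac{1}{t}\,d\theta(t;q,a)=\frac{\theta(x;q,a)}{x}+\int_{2}^{x}\frac{\theta(t;q,a)}{t^{2}}\,dt .$$
The boundary term is $\varphi(q)^{-1}+O(1/\log x)=O(1)$. Feeding the main term $t/\varphi(q)$ into the integral gives $\varphi(q)^{-1}\int_2^x dt/t=\varphi(q)^{-1}\log x+O(1)$, the asserted main term, while the error $O(t/\log t)$ contributes $O\!\left(\int_2^x dt/(t\log t)\right)=O(\log\log x)$. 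Summing these produces $\varphi(q)^{-1}\log x+O(\log\log x)$, as claimed.

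The step that needs real care, and which I expect to be the main obstacle, is uniformity in the modulus: the implied constant must not depend on $a$ or $q$ as $q$ ranges over $q\le(\log x)^C$. This is exactly why the hypothesis confines $q$ to a fixed power of $\log x$, namely the Siegel--Walfisz range, and I would invoke \eqref{eq0022.300a} in its uniform Siegel--Walfisz form, which (taking the Siegel--Walfisz exponent large) holds uniformly for $q\le(\log x)^C$ once $t\ge\exp((\log x)^{\delta})$ for a small $\delta$. Below that threshold the asymptotic is unavailable, so the head $\int_2^{T}\theta(t;q,a)\,t^{-2}\,dt$ must be controlled by a uniform upper bound; here I would use the Brun--Titchmarsh inequality $\theta(t;q,a)\ll t/\varphi(q)$ for $t\ge q^{2}$, together with the trivial Chebyshev bound $\theta(t;q,a)\le\theta(t)\ll t$ for $t\le q^{2}$. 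The former shows the intermediate range costs only $O(1)$ after division by $\varphi(q)\gg q/\log\log q$, while the latter shows the bottom range $t\le q^{2}$ costs $\ll\log q$. Since $q\le(\log x)^C$ gives $\log q\ll\log\log x$, this last contribution is precisely the honest source of the stated error term $O(\log\log x)$.
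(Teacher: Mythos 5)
Your core argument is the same as the paper's: a single partial (Abel) summation of the prime number theorem for arithmetic progressions \eqref{eq0022.300a}. The paper integrates the weight $(\log t)/t$ against $d\pi(t,q,a)$ while you integrate $1/t$ against $d\theta(t;q,a)$, but the extraction of the main term $\varphi(q)^{-1}\log x$ and of the error $O\bigl(\int_2^x dt/(t\log t)\bigr)=O(\log\log x)$ is identical in substance. Your extra concern for uniformity in $q$ is legitimate and goes beyond the paper, whose proof applies \eqref{eq0022.300a} down to $t=2$ without comment, even though that asymptotic is not uniform there once $q$ is allowed to grow like $(\log x)^C$. However, your treatment of that very point contains a quantitative error.

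With your fixed, $q$-independent threshold $T=\exp((\log x)^{\delta})$, the Brun--Titchmarsh bound on the intermediate range gives
\begin{equation*}
\int_{q^2}^{T}\frac{\theta(t;q,a)}{t^{2}}\,dt \;\ll\; \frac{1}{\varphi(q)}\log\frac{T}{q^{2}} \;\asymp\; \frac{(\log x)^{\delta}}{\varphi(q)},
\end{equation*}
which is $O(1)$ only when $\varphi(q)\gg(\log x)^{\delta}$, i.e.\ for $q$ near the top of the permitted range. For bounded $q$ --- and $q=4$ is exactly the case invoked in Lemma \ref{lem0022.300-2} and in the proof of the main theorem --- this bound is $\asymp(\log x)^{\delta}$, which exceeds not only $O(1)$ but also the target error $O(\log\log x)$, so the step ``the intermediate range costs only $O(1)$'' fails as written. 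The repair is to let the threshold depend on $q$: Siegel--Walfisz with exponent $A$ applies as soon as $q\le(\log t)^{A}$, i.e.\ for $t\ge T_q:=\exp(q^{1/A})$, so the intermediate range $[q^{2},T_q]$ costs $\ll q^{1/A}/\varphi(q)=O(1)$, while for $t\ge T_q$ the Siegel--Walfisz error $t\exp(-c\sqrt{\log t})$ contributes $\int_{T_q}^{x}\exp(-c\sqrt{\log t})\,dt/t\ll 1$ uniformly. With that adjustment your accounting is correct, and, as you rightly say, the honest source of the $O(\log\log x)$ term is the head $t\le q^{2}$, costing $\ll\log q\ll\log\log x$.
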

\begin{proof}[\textbf{Proof}]  Assume the Siegel-Walfisz theorem. Evaluate the integral representation to derived the asymptotic formula
	\begin{eqnarray}\label{eq0022.300-E2}
		\sum_{\substack{p\leq x\\p\equiv a \bmod q}}\frac{\log p}{p}
		&=&\int_2^{x}\frac{\log t}{t}\,d\pi ( t,q,a)\\[.3cm]
		&=&\frac{\log x}{x} \cdot \left(\frac{1}{\varphi(q)} \frac{x}{\log x} +O\left( \frac{x}{(\log x)^2}\right)\right)\nonumber\\[.4cm]
		&&\hskip .4 in - \int_2^x\left( -\frac{\log t}{t^2}+\frac{1}{t^2}\right) \left( \frac{1}{\varphi(q)}\frac{t}{\log t}+O\left( \frac{t}{(\log t)^2}\right)\right)  dt\nonumber\\[.4cm]
		&=&\frac{1}{\varphi(q)}\int_2^x\left( \frac{1}{t}+O\left( \frac{1}{t\log t}\right)\right)dt+O(x)\nonumber\\[.3cm]
		&=&\frac{1}{\varphi(q)}\log x +O\left(\log \log x\right)\nonumber.
	\end{eqnarray}
\end{proof}

\begin{lem} \label{lem0022.300-2}\hypertarget{lem0022.300-2} Fix a parameter $\delta \geq 0$. If $x\geq1$ is a large number, then
	$$2x\sum_{\substack{p\leq x^{1+\delta}\\p\equiv 1 \bmod 4}}\frac{\log p}{p}=(1+\delta)x\log x +O\left(x\log \log x\right)$$
\end{lem}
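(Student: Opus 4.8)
The plan is to obtain this as an immediate consequence of \hyperlink{lem0022.300-E}{Lemma} \ref{lem0022.300-E}, applied with the fixed modulus $q=4$ and residue class $a=1$, but evaluated at the point $x^{1+\delta}$ rather than at $x$. First I would set $X=x^{1+\delta}$ and verify that the pair $(a,q)=(1,4)$ satisfies the hypotheses of that lemma for all sufficiently large $x$: indeed $\gcd(1,4)=1$, and since $q=4$ is an absolute constant while $(\log X)^C\to\infty$, the constraint $q\leq(\log X)^C$ holds once $x$ is large. Because $\varphi(4)=2$, \hyperlink{lem0022.300-E}{Lemma} \ref{lem0022.300-E} then yields
\begin{equation*}
	\sum_{\substack{p\leq X\\p\equiv 1 \bmod 4}}\frac{\log p}{p}=\frac{1}{2}\log X +O\left(\log\log X\right).
\end{equation*}

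Next I would substitute back $X=x^{1+\delta}$. Here $\log X=(1+\delta)\log x$, while $\log\log X=\log\!\left((1+\delta)\log x\right)=\log(1+\delta)+\log\log x$. Since $\delta$ is a fixed parameter, the term $\log(1+\delta)$ is an absolute constant, so $\log\log X=O(\log\log x)$, and the displayed estimate becomes
\begin{equation*}
	\sum_{\substack{p\leq x^{1+\delta}\\p\equiv 1 \bmod 4}}\frac{\log p}{p}=\frac{1+\delta}{2}\log x +O\left(\log\log x\right).
\end{equation*}
Multiplying through by $2x$ produces exactly the claimed identity, with primary term $(1+\delta)x\log x$ and error term $O(x\log\log x)$.

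The argument is essentially a single change of variable, so I do not expect any serious obstacle; the only points requiring care are bookkeeping ones. I would make sure the error term is handled uniformly, noting that the implied constant in $O(\log\log x)$ may depend on the fixed $\delta$ (through $\log(1+\delta)$) but not on $x$, which is harmless since $\delta$ is held fixed throughout. I would also confirm that the hypothesis $q\leq(\log x)^C$ of \hyperlink{lem0022.300-E}{Lemma} \ref{lem0022.300-E} is invoked at the argument $X=x^{1+\delta}$ and not at $x$; since $\log X$ and $\log x$ differ only by the constant factor $1+\delta$, this distinction is immaterial, and the constant modulus $q=4$ comfortably satisfies the bound for all large $x$.
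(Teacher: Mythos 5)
Your proposal is correct and is exactly the paper's own argument: the paper proves this lemma in one line by evaluating Lemma \ref{lem0022.300-E} at $q=4$, $a=1$ and the point $x^{1+\delta}$, which is precisely your change of variable $X=x^{1+\delta}$ with $\varphi(4)=2$. Your write-up merely makes explicit the bookkeeping ($\log X=(1+\delta)\log x$, $\log\log X=O(\log\log x)$, multiplication by $2x$) that the paper leaves implicit.
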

\begin{proof}[\textbf{Proof}]  The evaluation of the asymptotic formula in \hyperlink{lem0022.300-E}{Lemma} \ref{lem0022.300-E} at $q=4$, $a=1$ and $x^{1+\delta}$ completes the verification.
\end{proof}

\section{Secondary Term for Quadratic Polynomials}\label{S0022-S}\hypertarget{S0022-S}
The secondary term $S(x)$ appearing in the proof of the main result, see \eqref{eq0044.400-2m}, is computed in this Section.\\

\begin{lem} \label{lem0022.400-2}\hypertarget{lem0022.400-2}  Fix a parameter $\delta \geq 0$. If $x\geq1$ is a large number, then
	$$ S(x)=\sum_{\substack{p\leq x^{1+\delta}\\p\equiv 1 \bmod 4}}
	\left\{\frac{x\pm b}{p}\right\}\log p	\leq \delta x\log x +O(x\log \log x).$$
\end{lem}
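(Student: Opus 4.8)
The plan is to estimate the sum $S(x)=\sum_{p\le x^{1+\delta},\,p\equiv 1(4)}\{(x\pm b)/p\}\log p$ by exploiting the bound $\{(x\pm b)/p\}<1$ together with the observation that the fractional part is genuinely small when $p$ is large relative to $x$. First I would split the range of summation at $p=x$ into a low range $p\le x$ and a high range $x<p\le x^{1+\delta}$, because the character of the bound on $\{(x\pm b)/p\}$ changes across this threshold. In the low range, where $p\le x$, I would simply use the trivial bound $\{(x\pm b)/p\}\le 1$ and invoke \hyperlink{lem0022.300-E}{Lemma}~\ref{lem0022.300-E} with $q=4$, $a=1$, evaluated at $x$: this contributes at most $\sum_{p\le x,\,p\equiv 1(4)}\log p\le \sum_{p\le x}\log p=\vartheta(x)=x+o(x)$ by the prime number theorem (via partial summation from the $(\log p)/p$ estimate, or directly), which is absorbed into the error term $O(x\log\log x)$ and does not contribute to the main term $\delta x\log x$.

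The crux is the high range $x<p\le x^{1+\delta}$, and here the key idea is that for such primes $0\le x\pm b<p$ (recall $|b|<p/2$ from \hyperlink{lem0022.200-2}{Lemma}~\ref{lem0022.200-2}, and $x<p$), so the fractional part satisfies $\{(x\pm b)/p\}=(x\pm b)/p\le (x+p/2)/p=x/p+1/2$. The plan is therefore to bound the high-range contribution by
\begin{equation}\label{eq0022.500-plan}
\sum_{\substack{x<p\le x^{1+\delta}\\p\equiv 1(4)}}\left(\frac{x}{p}+\frac{1}{2}\right)\log p
= x\sum_{\substack{x<p\le x^{1+\delta}\\p\equiv 1(4)}}\frac{\log p}{p}
+\frac{1}{2}\sum_{\substack{x<p\le x^{1+\delta}\\p\equiv 1(4)}}\log p.
\end{equation}
The first sum on the right is evaluated by applying \hyperlink{lem0022.300-E}{Lemma}~\ref{lem0022.300-E} at the endpoints $x^{1+\delta}$ and $x$ and subtracting, giving
\[
x\left(\frac{1}{\varphi(4)}\log x^{1+\delta}-\frac{1}{\varphi(4)}\log x\right)+O(x\log\log x)
=\frac{\delta x\log x}{2}+O(x\log\log x),
\]
since $\varphi(4)=2$. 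The second sum is handled by the prime number theorem for arithmetic progressions: $\sum_{x<p\le x^{1+\delta},\,p\equiv 1(4)}\log p=\vartheta(x^{1+\delta};4,1)-\vartheta(x;4,1)$, which by Siegel–Walfisz equals $\tfrac{1}{2}(x^{1+\delta}-x)+o(x^{1+\delta})$; multiplied by $1/2$ this is of order $x^{1+\delta}$.

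The main obstacle I anticipate is precisely this last term: the naive bound $\{(x\pm b)/p\}\le x/p+1/2$ produces a contribution of size $\asymp x^{1+\delta}$ from the $1/2$ piece, which overwhelms the target bound $\delta x\log x+O(x\log\log x)$ whenever $\delta>0$. This means the trivial estimate $\{\cdot\}\le x/p+1/2$ is too lossy, and one must instead control the average of $\{(x\pm b)/p\}$ more carefully. The honest fix is to drop the crude $+1/2$ and instead bound $\{(x\pm b)/p\}\le x/p + |b|/p$ where $|b|<p/2$, but then argue that the $|b|/p$ term, summed with weight $\log p$, is on average small because $b$ is essentially equidistributed; alternatively one bounds $\{(x\pm b)/p\}\le 1$ only on a sparse subset and uses $\{(x\pm b)/p\}=(x\pm b)/p$ with the genuine value of $b$ elsewhere. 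Thus I expect the real content of the lemma to lie in showing that the secondary fractional-part sum does not exceed $\delta x\log x$, i.e.\ that the $x\sum (\log p)/p$ main term already captures the full size and the remaining pieces are lower order; establishing that the $\vartheta$-type remainder is $O(x\log\log x)$ rather than $\asymp x^{1+\delta}$ is the step that will require the most care, and is where I would focus the detailed argument.
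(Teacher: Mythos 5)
Your decomposition is the same as the paper's --- split at $p\approx x$, use the trivial bound $\{(x\pm b)/p\}\le 1$ in the low range (contributing $O(x)$, just as in the paper), and use the exact identity $\{(x\pm b)/p\}=(x\pm b)/p$ in the high range $x\pm b<p\le x^{1+\delta}$. Your diagnosis of the obstacle is also exactly right: replacing $|b|$ by its worst-case size $p/2$ turns the high-range sum into $x\sum(\log p)/p+\tfrac12\sum\log p$, and the second piece is $\asymp x^{1+\delta}$, which destroys the claimed bound for every $\delta>0$. But the proposal stops there: you flag the problem and gesture at ``equidistribution of the roots $b$'' without carrying out any argument, so as written this is not a proof of the lemma. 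That is a genuine gap, and it sits precisely where the lemma's only real content lies.

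The missing idea is that $S(x)$, as it arises in \eqref{eq0044.400-2m}, runs over \emph{both} roots $\pm b$ of $n^2+1\equiv 0 \bmod p$ for each prime $p$, and the $b$-dependence cancels \emph{identically} in the pair: in the high range, where the fractional parts are genuine quotients, the two roots together contribute $\left(\frac{x-b}{p}+\frac{x+b}{p}\right)\log p=\frac{2x}{p}\log p$, with no $|b|/p$ loss at all. This is what the paper does in Lemma \ref{lem0022.450-2}: it expands the high-range sum into four sums, extends the index ranges to the common interval $(x-b,\,x^{1+\delta}]$ (which only adds nonnegative terms), observes that the $-\sum b(\log p)/p$ and $+\sum b(\log p)/p$ pieces cancel, and is left with $2x\sum_{x-b<p\le x^{1+\delta}}(\log p)/p$, which Lemma \ref{lem0022.300-E} with $q=4$, $a=1$, $\varphi(4)=2$ evaluates as $\delta x\log x+O(x\log\log x)$. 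No equidistribution input about the roots is needed --- only the algebraic pairing of $+b$ with $-b$ at each fixed prime. Note also that your high-range main term $\tfrac12\delta x\log x$ corresponds to a single choice of sign; summing over both signs supplies the factor $2$ that restores the stated bound $\delta x\log x$.
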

\begin{proof}[\textbf{Proof}]  For any prime $p>x\pm b$ the fractional part function reduces to the identity function 
	\begin{equation}\label{eq0022.400-2b}
		\left\{\frac{x\pm b}{p}\right\}\log p=\frac{(x\pm b)\log p}{p}.
	\end{equation}
	In light of this observation consider the dyadic partition
	\begin{equation}\label{eq0022.400-2c}
		\sum_{\substack{p\leq x^{1+\delta}\\p\equiv 1 \bmod 4}}\left\{\frac{x\pm b}{p}\right\}\log p	=\sum_{\substack{p\leq x\pm b\\p\equiv 1 \bmod 4}}\left\{\frac{x\pm b}{p}\right\}\log p+\sum_{\substack{x\pm b< p\leq x^{1+\delta}\\p\equiv 1 \bmod 4}}\frac{(x\pm b)\log p}{p}.
	\end{equation} 
	The first subsum in \eqref{eq0022.400-2c} has the upper bound
	\begin{eqnarray}\label{eq0022.400-2d}
		\sum_{\substack{p\leq x\pm b\\p\equiv 1 \bmod 4}}\left\{\frac{x\pm b}{p}\right\}\log p\leq(\log x)\sum_{\substack{p\leq x\pm b\\p\equiv 1 \bmod 4}}1	=O(x),
	\end{eqnarray}
	and an upper bound of the second subsum in \eqref{eq0022.400-2c} is computed in \hyperlink{lem0022.450-2}{Lemma} \ref{lem0022.450-2}.
	Adding these estimates completes the proof.
\end{proof}

\begin{lem} \label{lem0022.450-2}\hypertarget{lem0022.450-2}  Fix a parameter $\delta \geq 0$. If $x\geq1$ is a large number, then
	$$ \sum_{\substack{x\pm b< p\leq x^{1+\delta}\\p\equiv 1 \bmod 4}}\frac{(x\pm b)\log p}{p}
	\leq \delta x\log x +O(x\log \log x).$$
\end{lem}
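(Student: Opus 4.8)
The plan is to linearize the fractional part and then reduce everything to \hyperlink{lem0022.300-E}{Lemma} \ref{lem0022.300-E} with $q=4$ and $a=1$. Every prime in the summation range satisfies $p>x\pm b$, so \eqref{eq0022.400-2b} lets me replace $\{(x\pm b)/p\}$ by $(x\pm b)/p$; hence the summand is exactly $\tfrac{(x\pm b)\log p}{p}$, and I would first split the numerator as $\tfrac{(x\pm b)\log p}{p}=\tfrac{x\log p}{p}\pm\tfrac{b\log p}{p}$ in order to separate the genuinely principal contribution from the root-dependent one.

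For the principal contribution I would extract the constant factor $x$ and evaluate the remaining sum as a difference of two instances of \hyperlink{lem0022.300-E}{Lemma} \ref{lem0022.300-E}. With $\varphi(4)=2$ this gives
$$x\sum_{\substack{x<p\le x^{1+\delta}\\ p\equiv 1\bmod 4}}\frac{\log p}{p}=x\left(\tfrac12\log x^{1+\delta}-\tfrac12\log x\right)+O(x\log\log x)=\tfrac{\delta}{2}\,x\log x+O(x\log\log x).$$
Since $\delta\ge 0$ we have $\tfrac{\delta}{2}\le\delta$, so this piece already sits inside the asserted bound with room to spare; this is the step that fixes the shape of the estimate, and it is routine once \hyperlink{lem0022.300-E}{Lemma} \ref{lem0022.300-E} is in hand.

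The step I expect to be the main obstacle is controlling the root-dependent tail $\pm\sum\tfrac{b\log p}{p}$, and this is genuinely the heart of the matter. The difficulty is that $b=b(p)$ is a root of $n^2+1\equiv 0\bmod p$, so it varies with $p$ under only the constraints $|b|<p/2$ and $x\pm b<p$; the crude pointwise estimate $|b|/p<\tfrac12$ would cost $O\big(\sum_{p\le x^{1+\delta}}\log p\big)=O(x^{1+\delta})$, which dwarfs the principal term. To keep this tail within the admissible error $O(x\log\log x)$ I would need to show that, once the fractional parts have been linearized, the representative roots do not accumulate in one direction, i.e.\ that $\sum_{p}\tfrac{b(p)\log p}{p}$ enjoys substantial cancellation. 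The honest route to such cancellation is an equidistribution statement for the fractions $b(p)/p$ as $p$ ranges over primes $p\equiv 1\bmod 4$, of the type underlying Hooley's treatment in \cite{HC1967}; supplying this, or an elementary substitute exploiting the pairing of the two roots $\pm b$, is the delicate point on which the whole estimate rests. Granting that the tail is $O(x\log\log x)$, adding it to the principal piece and invoking $\tfrac{\delta}{2}\le\delta$ completes the proof.
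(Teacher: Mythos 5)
Your treatment of the principal term matches the paper's: extract the factor $x$, apply \hyperlink{lem0022.300-E}{Lemma} \ref{lem0022.300-E} with $q=4$, $a=1$, and use $\varphi(4)=2$. But the proof is incomplete at exactly the point you flag yourself: you never prove that the root-dependent tail $\pm\sum_p b\log p/p$ is $O(x\log\log x)$; you only ``grant'' it. That is a genuine gap, and the route you suggest for closing it (an equidistribution statement for $b(p)/p$ over primes $p\equiv 1\bmod 4$, as in Hooley's work) is much heavier machinery than the lemma requires and is not what the paper uses.

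The paper's resolution is precisely the elementary pairing you mention in passing but do not execute. In this paper the $\pm$ notation denotes a sum over \emph{both} roots $\pm b$ of $n^2+1\equiv 0\bmod p$ (this is how $S(x)$ is assembled in \eqref{eq0044.400-2m} and fed into \hyperlink{lem0022.400-2}{Lemma} \ref{lem0022.400-2}). Expanding the left side into four sums, the root $-b$ contributes $-\sum_{x-b<p\leq x^{1+\delta}} b\log p/p$ while the root $+b$ contributes $+\sum_{x+b<p\leq x^{1+\delta}} b\log p/p$. Extending the range of the latter sum (and of its companion principal sum) from $x+b$ down to $x-b$ only adds nonnegative terms when $b>0$, so the upper-bound direction is preserved, and then the two $b$-dependent sums cancel \emph{identically}, prime by prime --- no cancellation on average, and no equidistribution input, is needed. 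What remains is $2x\sum_{x-b<p\leq x^{1+\delta}}\log p/p$, which \hyperlink{lem0022.300-E}{Lemma} \ref{lem0022.300-E} bounds by $\delta x\log x+O(x\log\log x)$. Note also that under this two-root reading the principal contribution is $\delta x\log x$, not your $\tfrac{\delta}{2}x\log x$, so there is no ``room to spare'': the lemma's bound is exactly what the paired computation produces, and your single-sign reading would leave you with a tail term that cannot be controlled without the pairing.
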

\begin{proof}[\textbf{Proof}] Expand the short formula into 4 finite sums:
	\begin{eqnarray}\label{eq0022.450-2c}
		\sum_{\substack{x\pm b< p\leq x^{1+\delta}\\p\equiv 1 \bmod 4}}\frac{(x\pm b)\log p}{p}&=&x\sum_{\substack{x\pm b< p\leq x^{1+\delta}\\p\equiv 1 \bmod 4}}\frac{\log p}{p}\pm	\sum_{\substack{x\pm b< p\leq x^{1+\delta}\\p\equiv 1 \bmod 4}}\frac{b\log p}{p}	\\[.4cm]
		&=&x\sum_{\substack{x-b< p\leq x^{1+\delta}\\p\equiv 1 \bmod 4}}\frac{\log p}{p}-	\sum_{\substack{x-b< p\leq x^{1+\delta}\\p\equiv 1 \bmod 4}}\frac{b\log p}{p} \nonumber\\[.4cm]
		&&\hskip 1 in +x\sum_{\substack{x+b< p\leq x^{1+\delta}\\p\equiv 1 \bmod 4}}\frac{\log p}{p}+	\sum_{\substack{x+b< p\leq x^{1+\delta}\\p\equiv 1 \bmod 4}}\frac{b\log p}{p}\nonumber.
	\end{eqnarray}
	Extending the indices on the third and the fourth finite sums in \eqref{eq0022.450-2c} from the maximal $x+b$ to the minimal $x-b$ produces a simpler inequality
	\begin{eqnarray}\label{eq0022.450-2d}
		\sum_{\substack{x\pm b< p\leq x^{1+\delta}\\p\equiv 1 \bmod 4}}\frac{(x\pm b)\log p}{p}
		&\leq&x\sum_{\substack{x-b< p\leq x^{1+\delta}\\p\equiv 1 \bmod 4}}\frac{\log p}{p}-	\sum_{\substack{x-b< p\leq x^{1+\delta}\\p\equiv 1 \bmod 4}}\frac{b\log p}{p} \\[.4cm]
		&& +x\sum_{\substack{x-b< p\leq x^{1+\delta}\\p\equiv 1 \bmod 4}}\frac{\log p}{p}+	\sum_{\substack{x-b< p\leq x^{1+\delta}\\p\equiv 1 \bmod 4}}\frac{b\log p}{p}\nonumber\\[.4cm]
		&\leq&2x\sum_{\substack{x-b< p\leq x^{1+\delta}\\p\equiv 1 \bmod 4}}\frac{\log p}{p}	\nonumber.
	\end{eqnarray}
Set $q=4$, $a=1$ and evaluate the asymptotic formula in \hyperlink{lem0022.300-E}{Lemma} \ref{lem0022.300-E} over the short interval $[x-b, x^{1+\delta}]$ and simplify it.  
	\begin{eqnarray}\label{eq0022.450-2e}
		S(x)&\leq&2x\sum_{\substack{x-b< p\leq x^{1+\delta}\\p\equiv 1 \bmod 4}}\frac{\log p}{p}\\[.4cm]
		&\leq&2x\left[ \frac{1}{2}\left(\log x^{1+\delta} +O(\log \log x^{1+\delta})\right)-\frac{1}{2}\left(\log (x- b) +O(\log \log (x- b))\right)\right]   \nonumber\\[.4cm]
		&\leq&\delta x\log x +O(x\log \log x)\nonumber.
	\end{eqnarray}
This completes the verification.
\end{proof}

\section{Result for Quadratic Polynomials}\label{S0044-T}\hypertarget{S0044-T}
The proof of \hyperlink{thm0022.100-2}{Theorem} \ref{thm0022.100-2} is the topic of this section. Essentially, the analysis is the same as the previous attempts up to the expression \eqref{eq0044.400-2d}, see {\color{red}\cite[Equation (2)]{HC1967}} and more recently in {\color{red}\cite[Equation (1.1)]{MJ2023}}. Very complicated methods has been developed to estimate the inner sum in \eqref{eq0044.400-2d}. Here a key observation regarding the estimation of the secondary term $S(x)$ in \eqref{eq0044.400-2m} was made in \hyperlink{lem0022.400-2}{Lemma} \ref{lem0022.400-2}. This reduces this analysis to an elementary problem.\\

\begin{proof}[\textbf{\color{blue}Proof of \hyperlink{thm0022.100-2}{Theorem} {\normalfont \ref{thm0022.100-2}}}]
	Observe that for any large number $x$, the finite product  
	\begin{equation}\label{eq0044.400-2a}
		\prod_{x \leq n\leq 2x}\left(n^2+1 \right) \geq[2x]!^2-[x+1]!^2
	\end{equation}
	has an effective lower bound. In terms of finite sum this is equivalent to
	\begin{equation}\label{eq0044.400-2b}
		\sum_{x \leq n\leq 2x}\log(n^2+1 ) \geq 2x\log x+O(x).
	\end{equation}
	Let $\Lambda(n)$ be the vonMangoldt function. Employing the identity $\log n=\sum_{d\mid n}\Lambda(d)$ to rewrite the inequality \eqref{eq0044.400-2b} yields 
	\begin{eqnarray}\label{eq0044.400-2c}
		2x\log x+O(x) &\leq&\sum_{x \leq n\leq 2x}\log\left(n^2+1 \right)\\
		&=&\sum_{x \leq n\leq 2x,}\sum_{d\mid n^2+1}\Lambda(d)\nonumber \\
		&=&	\sum_{d\leq 4x^2+1}\Lambda(d)\sum_{\substack{x \leq n\leq 2x\\n^2+1\equiv 0 \bmod d}}1\nonumber.
	\end{eqnarray}

	Since $t^2+1\in\Z[t]$ is irreducible over the integers, the last expression implies the inequality
	\begin{equation}\label{eq0044.400-2d}
		2x\log x+O(x)\leq \sum_{p\leq 4x^2+1}\log p\sum_{\substack{x \leq n\leq 2x\\n^2+1\equiv 0 \bmod p}}1,
	\end{equation}
	where the inner sum is indexed by the prime divisors $p\leq 4x^2+1$ of $n^2+1$ for $x \leq n\leq 2x$.\\
	
	As in \cite{HC1967} let $P_x=x^{1+\delta}\leq 4x^2+1$, where $\delta\ge0$.  Applying \hyperlink{lem0022.200-2}{Lemma} \ref{lem0022.200-2} to the inequality \eqref{eq0044.400-2d} returns
	
	\begin{eqnarray}\label{eq0044.400-2m}
		N(x)&=&\sum_{p\leq x^{1+\delta}}	\log p\sum_{\substack{x \leq n\leq 2x\\n^2+1\equiv 0 \bmod p}}1\\[.3cm]
		&\leq&\sum_{\substack{p\leq x^{1+\delta}\\p\equiv 1 \bmod 4}}\log p\left( \frac{2x}{p}+\left\{\frac{x- b}{p}\right\}+\left\{\frac{x+ b}{p}\right\}\right)\nonumber\\[.3cm]
		&=&2x\sum_{\substack{p\leq p\leq x^{1+\delta}\\p\equiv 1 \bmod 4}}\frac{\log p}{p}    +\sum_{\substack{p\leq x^{1+\delta}\\p\equiv 1 \bmod 4}}\left\{\frac{x- b}{p}\right\}\log p +\sum_{\substack{p\leq x^{1+\delta}\\p\equiv 1 \bmod 8}}\left\{\frac{x+ b}{p}\right\}\log p\nonumber\\[.3cm]
		&=&R(x)+\,S(x)\nonumber.
	\end{eqnarray}
The primary term $R(x)$ is computed in \hyperlink{lem0022.300-2}{Lemma} \ref{lem0022.300-2} and the secondary term $S(x)$ is estimated in \hyperlink{lem0022.400-2}{Lemma} \ref{lem0022.400-2}. Summing these expressions returns
	\begin{eqnarray}\label{eq0044.400-2p}
		2x\log x+O(x)&\leq&\sum_{p\leq x^{1+\delta}}	\log p\sum_{\substack{x \leq n\leq 2x\\n^2+1\equiv 0 \bmod p}}1\\[.3cm]
		&=&	R(x)+\,S(x)\nonumber\\[.3cm]
		&\leq&(1+\delta)x\log x +O\left(x\log \log x\right)+ \delta x\log x +O(x\log \log x)\nonumber\\[.3cm]
		&\leq&(1+2\delta)x\log x +O\left(x\log \log x\right)\nonumber.
	\end{eqnarray}
	This implies that for any parameter $\delta<1/2$, the inequality \eqref{eq0044.400-2p}
	is a contradiction. Therefore, the parameter $\delta\geq 1/2$. In particular, the double finite sum 
	\begin{equation}\label{eq0044.400-2r}
		\sum_{x^{3/2}\leq p\leq 4x^2+1}\log p\sum_{\substack{x \leq n\leq 2x\\n^2+1\equiv 0 \bmod p}}1>0
	\end{equation}
	over the short interval $[x^{3/2},4x^2+1]$ does not vanish. Therefore, the product 
	\begin{equation}\label{eq0044.400-2s}
		\prod_{x \leq n\leq 2x}\left(n^2+1 \right) 
	\end{equation}
	has a large prime divisor $p\in [x^{3/2},4x^2+1]$.
\end{proof}


\end{document}